\documentclass[12pt, reqno]{amsart}
\usepackage{amsmath, amsthm, amscd, amsfonts, amssymb, graphicx, color}
\usepackage[bookmarksnumbered, colorlinks, plainpages]{hyperref}
\input{mathrsfs.sty}

\textheight 22.5truecm \textwidth 15truecm
\setlength{\oddsidemargin}{0.35in}\setlength{\evensidemargin}{0.35in}

\setlength{\topmargin}{-.5cm}

\newtheorem{theorem}{Theorem}[section]
\newtheorem{lemma}[theorem]{Lemma}

\newtheorem{corollary}[theorem]{Corollary}
\theoremstyle{definition}

\theoremstyle{remark}

\numberwithin{equation}{section}

\begin{document}

\title[Matrix Hermite--Hadamard type inequalities]{Matrix Hermite--Hadamard type inequalities}

\author[M.S. Moslehian]{Mohammad Sal Moslehian}

\address{Department of Pure Mathematics, Center of Excellence in Analysis on Algebraic Structures (CEAAS), Ferdowsi University of
Mashhad, P.O. Box 1159, Mashhad 91775, Iran.}
\email{moslehian@ferdowsi.um.ac.ir and moslehian@member.ams.org}
\urladdr{\url{http://profsite.um.ac.ir/~moslehian/}}

\subjclass[2010]{47A63; 15A42; 47A30.}

\keywords{Hermite--Hadamard inequality; convex function; operator
convex function; Mond--Pe\v{c}ari\'c method; majorization;
eigenvalue.}

\begin{abstract}
We present several matrix and operator inequalities of
Hermite--Hadamard type. We first establish a majorization version
for monotone convex functions on matrices. We then utilize the
Mond--Pecaric method to get an operator version for convex
functions. We also present some applications. Finally we obtain an
Hermite--Hadamard inequality for operator convex functions, positive
linear maps and operators acting on Hilbert spaces.
\end{abstract} \maketitle

\section{Introduction}

The following fundamental inequality, which was first published by
Hermite in 1883 in an elementary journal and independently proved in
1893 by Hadamard in \cite{JH}, is well known as the
Hermite--Hadamard inequality in the literature:
\begin{eqnarray}\label{s}
(y-x)f\left(\frac{x+y}{2}\right) \leq \int_0^1f(t)\,dt \leq
(y-x)\frac{f(x)+f(y)}{2}\,,
\end{eqnarray}
where $f$ is a convex function on an interval $[x,y]$. It provides a
two-sided estimate of the mean value of a convex function. If $f$ is
convex on a segment $[a,b]$ of a linear space, one can easily
observe that \eqref{s} is equivalent to the following double
inequality:
\begin{eqnarray}\label{m}
f\left(\frac{a+b}{2}\right) \leq \int_0^1f(ta+(1-t)b)\,dt \leq
\frac{f(a)+f(b)}{2}\,.
\end{eqnarray}
The Hermite--Hadamard inequality has several applications in
nonlinear analysis and the geometry of Banach spaces, see \cite{K}.
During the last decades several interesting generalizations, special
cases and formulations of this significant inequality for some types
of functions $f$ and various frameworks have been obtained. It gives
indeed a necessary and sufficient condition for a function $f$ to be
convex. We would like to refer the reader to \cite{12, 4, 2, 11, 3,
1, 8, 7, 6, 10} and references therein for more information. In
particular, Dragomir \cite{DRA1} very recently established an
operator version of the inequality for the operator convex
functions. In fact, in matrix analysis, there is an active area,
where some interesting matrix or norm inequalities are derived from
their scalar counterparts. Such inequalities may hold for operators
acting on an infinite dimensional separable Hilbert space. This is
based on the fact that self-adjoint operators (Hermitian matrices)
can be regarded as a generalization of real numbers. A natural
generalization of the classical Hermite--Hadamard inequality to
Hermitian matrices could be the double inequality
\begin{eqnarray}\label{false}
f\left(\frac{A+B}{2}\right) \leq \int_0^1f(tA+(1-t)B)\,dt \leq
\frac{f(A)+f(B)}{2}\,,
\end{eqnarray}
which is however not true, in general. To see this let us consider
the convex function $f(t)=t^3$ and matrices
$A={\scriptsize\left( \begin{array}{cc} 2 & 1 \\ 1 & 1 \\
\end{array}\right)}, B={\scriptsize\left( \begin{array}{cc} 1 & 0 \\
0 & 0 \\ \end{array}\right)}$. Then some straightforward
computations show that
$\left(\frac{A+B}{2}\right)^3={\scriptsize\left(
\begin{array}{cc} 17/4 & 7/4 \\ 7/4 & 3/4 \\ \end{array}\right)},\,\,
\int_0^1(tA+(1-t)B)^3\,dt={\scriptsize\left( \begin{array}{cc} 31/6
& 5/2
\\ 5/2 & 4/3 \\ \end{array}\right)},\,\,
\frac{A^3+B^3}{2}={\scriptsize\left( \begin{array}{cc} 7 & 4 \\
4 & 5/2 \\ \end{array}\right)}$ and that not both inequalities of
\eqref{false} simultaneously are true.

In this paper, we present some operator inequalities of
Hermite--Hadamard type in which we use the convexity instead of the
operator convexity. To do this, we first restrict ourselves to the
monotone convex functions to get a majorization version as our main
result. We then utilize the Mond--Pe\v{c}ari\'c method \cite{mp1,
seo, pms} to get another operator version of inequality \eqref{m}.
We also present some applications. Finally we generalize the main
result of \cite{DRA1} for operator convex functions, positive linear
maps and operators on (not necessarily finite dimensional) Hilbert
space.

\section{Preliminaries}

Let $\mathbb{B}(\mathscr{H})$ denote the algebra of all bounded
linear operators acting on a complex Hilbert space
$(\mathscr{H},\langle \cdot,\cdot\rangle)$ and $I_{\mathscr{H}}$ is
the identity operator. In the case where $\dim \mathscr{H} =n$, we
identify $\mathbb{B}(\mathscr{H})$ with the full matrix algebra
$\mathcal{M}_n$ of all $n \times n$ matrices with entries in the
complex field $\mathbb{C}$. We denote by $\mathcal{H}_{n}(J)$ the
set of all Hermitian matrices in $\mathcal{M}_{n}$, whose spectra
are contained in an interval $J \subseteq \mathbb{R}$. By $I_{n}$ we
denote the identity matrix of $\mathcal{M}_{n}$. An operator $ A\in
\mathbb{B}(\mathscr{H})$ is called positive (positive-semidefinite
for matrices) if $\langle A\xi, \xi\rangle \geq 0$ holds for every
$\xi\in \mathscr{H}$ and then we write $A\geq 0$. In particular, if
$A$ is invertible and positive (positive-definite for matrices),
then we write $A>0$. For self-adjoint operators $A,B \in
\mathbb{B}(\mathscr{H})$, we say $A\leq B$ if $B-A\geq0$. A map
$\Phi$ between $C^*$-algebras of operators is called positive if
$\Phi(A)\geq 0$ whenever $A\geq 0$. Throughout the paper all
real-valued functions are assumed to be continuous. A real-valued
function $f$ defined on an interval $J$ is called operator convex if
$f(\lambda A + (1-\lambda)B) \leq \lambda f(A)+(1-\lambda)f(B)$ for
all self-adjoint operators $A, B \in \mathbb{B}(\mathscr{H})$ with
spectra in $J$ and all $\lambda \in [0,1]$. Of course, there are
several equivalent version of the operator convexity in the
literature, see \cite[Chapter I]{seo} and \cite{mosl} and references
therein.

For a Hermitian matrix $A\in \mathcal{M}_n$, we denote by
$\lambda_{1}(A)\geq \lambda _{2}(A)\geq \cdots \geq \lambda _{n}(A)$
the eigenvalues of $A$ arranged in the decreasing order with their
multiplicities counted. The notation $\lambda(A)$ stands for the row
vector $(\lambda_{1}(A), \lambda _{2}(A), \cdots, \lambda _{n}(A))$.
The eigenvalue inequality $\lambda(A) \leq \lambda(B)$ means that
$\lambda_j(A)\leq \lambda_j(B)$ for all $1 \leq j \leq n$. As a
matter of fact for any two Hermitian matrices $A, B$ the inequality
$\lambda(A)\leq \lambda(B)$ holds if and only if $A\leq U^*BU$ for
some unitary matrix $U$. The weak majorization $\lambda(A) \prec_w
\lambda(B)$ means $\sum_{j=1}^k\lambda_j(A)\leq
\sum_{j=1}^k\lambda_j(B)\,\,(k=1, 2, \ldots, n)$. It is known that
three kinds of orders defined above satisfy $A\leq B \Rightarrow
\lambda(A) \leq \lambda (B) \Rightarrow \lambda(A) \prec_w
\lambda(B)$. A norm $\left\vert \left\vert \left\vert \cdot
\right\vert \right\vert \right\vert $ on $\mathcal{M}_{n}$ is said
to be unitarily invariant if $\left\vert \left\vert \left\vert
UAV\right\vert \right\vert \right\vert =\left\vert \left\vert
\left\vert A\right\vert \right\vert \right\vert $ for all $A\in
\mathcal{M}_{n}$ and all unitary matrices $U,V\in \mathcal{M}_{n}$.
The Ky Fan norms, the Schatten $p$-norms and the operator norm
provide significant families of unitarily invariant norms. The Ky
Fan dominance theorem states that $\lambda(A) \prec_w \lambda(B)$ if
and only if $\left\vert \left\vert \left\vert A\right\vert
\right\vert\right\vert \leq \left\vert \left\vert \left\vert
B\right\vert \right\vert \right\vert $ for all unitarily invariant
norms $\left\vert \left\vert \left\vert \cdot \right\vert
\right\vert\right\vert $. For more information on matrix analysis
the reader is referred to \cite{BHA1}.

\section{Operator Hermite--Hadamard type inequalities for convex functions}

We start this section by recalling two useful lemmas.

\begin{lemma}\cite[Lemma 2.4 and Remark 2.5]{SMS} (see also \cite[p. 281]{BHA1} and \cite[Theorem 2.3]{A-S})
\label{lem1}  Let $A\in \mathcal{H}_{n}(J)$, $f$ be a convex
function defined on $J$, $x\in \mathbb{C}^{m}$ and $\Phi:
\mathcal{M}_{n} \to \mathcal{M}_{m}$ be a positive linear map. If
either (i) $\Phi$ is unital and $\|x\|=1$ or (ii) $\|x\| \leq 1,
0\in J, f(0)\leq 0$ and $0<\Phi (I_{n})\leq I_{m}$, then
\begin{equation*}
f(\langle \Phi (A)x,x\rangle )\leq \langle \Phi (f(A))x,x\rangle\,.
\end{equation*}
\end{lemma}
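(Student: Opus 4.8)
The plan is to reduce the matrix inequality to the classical scalar Jensen inequality by combining the finite spectral decomposition of $A$ with the positivity of the functional $T\mapsto \langle\Phi(T)x,x\rangle$. First I would diagonalise $A=\sum_{i=1}^{k}\lambda_i P_i$, where the $\lambda_i\in J$ are the distinct eigenvalues and the $P_i$ are mutually orthogonal projections with $\sum_{i=1}^k P_i=I_n$. Linearity of $\Phi$ and the functional calculus then give $\Phi(A)=\sum_i\lambda_i\Phi(P_i)$ and $\Phi(f(A))=\sum_i f(\lambda_i)\Phi(P_i)$. Putting $w_i:=\langle\Phi(P_i)x,x\rangle$, the positivity of $\Phi$ together with $P_i\geq 0$ forces $w_i\geq 0$, while $\sum_i w_i=\langle\Phi(I_n)x,x\rangle$. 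Thus the two sides to be compared collapse to the scalar expressions $\langle\Phi(A)x,x\rangle=\sum_i w_i\lambda_i$ and $\langle\Phi(f(A))x,x\rangle=\sum_i w_i f(\lambda_i)$, and everything reduces to a weighted Jensen inequality with weights $w_i$.

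In case (i), the hypotheses $\Phi(I_n)=I_m$ and $\|x\|=1$ give $\sum_i w_i=\|x\|^2=1$, so $(w_i)_i$ is a probability vector. Then $\sum_i w_i\lambda_i$ is a convex combination of points of the interval $J$, hence lies in $J$, and the ordinary Jensen inequality for the convex function $f$ yields $f\!\left(\sum_i w_i\lambda_i\right)\leq\sum_i w_i f(\lambda_i)$, which is exactly the assertion.

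Case (ii) is where the work lies. Set $c:=\sum_i w_i=\langle\Phi(I_n)x,x\rangle$; from $0<\Phi(I_n)\leq I_m$ and $\|x\|\leq 1$ one obtains $0\leq c\leq 1$. If $x=0$ the two sides equal $f(0)$ and $0$ respectively, so the claim follows at once from $f(0)\leq 0$. If $x\neq 0$ then $c\in(0,1]$, and my idea is to manufacture a genuine probability vector by adjoining the extra node $0\in J$ carrying the missing mass $1-c$. Since $(1-c)+\sum_i w_i=1$ with all weights nonnegative, Jensen applied to this augmented system gives $f\!\left((1-c)\cdot 0+\sum_i w_i\lambda_i\right)\leq(1-c)f(0)+\sum_i w_i f(\lambda_i)$. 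The argument on the left is again a convex combination of points of $J$, so it lies in $J$, and it equals $\langle\Phi(A)x,x\rangle$; on the right the term $(1-c)f(0)$ is $\leq 0$ and may be discarded, leaving $f(\langle\Phi(A)x,x\rangle)\leq\langle\Phi(f(A))x,x\rangle$.

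The hard part is precisely the bookkeeping in case (ii): one must check both that $\langle\Phi(A)x,x\rangle$ genuinely lies in $J$, so that $f$ is evaluated at a legitimate point, and that the missing mass $1-c$ is accounted for with the correct sign. The two hypotheses peculiar to this case, $0\in J$ and $f(0)\leq 0$, are exactly what make this augmentation-by-zero succeed---the first keeps the argument inside $J$, the second guarantees that the spurious term is $\leq 0$. I would favour this spectral-decomposition route over a supporting-line argument at $\langle\Phi(A)x,x\rangle$, since the latter would require separately handling the possible non-differentiability (indeed infinite one-sided slope) of $f$ at an endpoint of $J$, whereas the finite Jensen inequality needs no such regularity.
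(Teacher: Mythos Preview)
The paper does not supply its own proof of this lemma; it is quoted verbatim from \cite[Lemma~2.4 and Remark~2.5]{SMS}, with parallel references to \cite{BHA1} and \cite{A-S}. Your argument is correct and is precisely the standard one: pass to the spectral decomposition $A=\sum_i\lambda_iP_i$, set $w_i=\langle\Phi(P_i)x,x\rangle\geq 0$, and reduce to the finite Jensen inequality. Your handling of case~(ii) is exactly right---the hypotheses $0\in J$ and $f(0)\leq 0$ are used to absorb the deficit $1-c$ in the total weight by adjoining the node $0$ with mass $1-c$, and the strict positivity $\Phi(I_n)>0$ is what guarantees $c>0$ when $x\neq 0$. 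This is the argument one finds in the cited sources, so there is nothing to compare against here beyond noting that your proof matches the intended one.
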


\begin{lemma}
\cite[p. 67]{BHA1}\label{lem2} If $A \in \mathcal{H}_n$, then
\begin{equation*}
\sum_{j=1}^k \lambda_j (A)= \max \sum_{j=1}^k \langle A x_j,x_j
\rangle\qquad (1\le k \le n),
\end{equation*}
where the maximum is taken over all choices of orthonormal vectors $
x_1,x_2,\cdots,x_k \in \mathbb{C}^n$\,.
\end{lemma}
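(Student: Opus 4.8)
The plan is to prove the asserted equality by establishing the two inequalities separately, showing that the quantity on the right is attained and cannot exceed $\sum_{j=1}^k \lambda_j(A)$. Fix an orthonormal eigenbasis $u_1,\dots,u_n$ of $A$ with $Au_i=\lambda_i(A)u_i$. First I would dispose of the easy direction: taking $x_j=u_j$ for $1\le j\le k$ gives $\sum_{j=1}^k\langle Ax_j,x_j\rangle=\sum_{j=1}^k\lambda_j(A)$, so the maximum over orthonormal systems is at least $\sum_{j=1}^k\lambda_j(A)$ and is in fact attained.

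The substance is the reverse inequality. Given an arbitrary orthonormal set $x_1,\dots,x_k\in\mathbb{C}^n$, I would expand each $x_j$ in the eigenbasis and introduce the coefficients $c_i=\sum_{j=1}^k|\langle x_j,u_i\rangle|^2$ for $1\le i\le n$. Using the spectral decomposition $A=\sum_{i=1}^n\lambda_i(A)\,u_iu_i^*$, a direct computation gives $\sum_{j=1}^k\langle Ax_j,x_j\rangle=\sum_{i=1}^n\lambda_i(A)\,c_i$. The two structural facts I would then record are that $0\le c_i\le 1$, because $c_i=\langle Pu_i,u_i\rangle$ where $P$ is the orthogonal projection onto the span of $x_1,\dots,x_k$ and hence $c_i\le\|u_i\|^2=1$, and that $\sum_{i=1}^n c_i=\sum_{j=1}^k\|x_j\|^2=k$ upon interchanging the order of summation.

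It then remains to bound $\sum_{i=1}^n\lambda_i(A)\,c_i$ over all weight vectors $(c_i)$ with $0\le c_i\le1$ and $\sum_i c_i=k$. Since the eigenvalues are in decreasing order, this weighted sum is largest when all the mass sits on the top $k$ indices. I would make this precise by an Abel-type rearrangement: writing the difference as
\begin{equation*}
\sum_{i=1}^k\lambda_i(A)-\sum_{i=1}^n\lambda_i(A)\,c_i
=\sum_{i=1}^k\lambda_i(A)(1-c_i)-\sum_{i=k+1}^n\lambda_i(A)\,c_i,
\end{equation*}
and bounding each term against $\lambda_k(A)$ using $\lambda_i(A)\ge\lambda_k(A)$ with $1-c_i\ge0$ for $i\le k$, and $\lambda_i(A)\le\lambda_k(A)$ with $c_i\ge0$ for $i>k$, one finds this difference is at least $\lambda_k(A)\bigl(k-\sum_{i=1}^n c_i\bigr)=0$. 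Hence $\sum_{j=1}^k\langle Ax_j,x_j\rangle\le\sum_{j=1}^k\lambda_j(A)$, which together with the first paragraph yields the claim.

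The main obstacle is the last optimization step: one must verify that the constraints $0\le c_i\le1$ and $\sum_i c_i=k$ are exactly what is needed and that the rearrangement inequality goes through without sign hypotheses on the $\lambda_i(A)$. Everything else is bookkeeping with the spectral decomposition.
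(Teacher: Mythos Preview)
Your argument is correct; this is the standard proof of Ky Fan's maximum principle. The paper itself does not supply a proof of this lemma but merely quotes it from Bhatia's \emph{Matrix Analysis}, so there is nothing to compare against beyond noting that your self-contained derivation via the coefficients $c_i=\langle Pu_i,u_i\rangle$ and the elementary rearrangement bound is exactly the classical route.
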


We are ready to give the operator version of the first inequality of
the Hermite--Hadamard inequality.
\begin{theorem}\label{t1}
Let $A, B\in \mathcal{H}_{n}(J)$, $f$ be a convex function on $J$
and $\Phi$ be a positive linear map from $\mathcal{M}_{n}$ to
$\mathcal{M}_{m}$. If either (i) $\Phi$ is unital or (ii) $0\in J,
f(0)\leq 0$ and $0<\Phi (I_{n})\leq I_{m}$, then
\begin{eqnarray*}
\lambda\left(f\left(\frac{\Phi(A)+\Phi(B)}{2}\right)\right) \prec_w
\lambda\left(\Phi\left(\int_0^1f(tA+(1-t)B)\,dt\right)\right)\,.
\end{eqnarray*}
\end{theorem}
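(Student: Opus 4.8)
The plan is to deduce the weak majorization from its defining partial-sum inequalities, i.e. to prove for every $k\in\{1,\dots,m\}$ that $\sum_{j=1}^{k}\lambda_j(X)\le\sum_{j=1}^{k}\lambda_j(Y)$, where I abbreviate $M=\tfrac{\Phi(A)+\Phi(B)}{2}$, $X=f\bigl(\tfrac{\Phi(A)+\Phi(B)}{2}\bigr)$ and $Y=\Phi\bigl(\int_0^1 f(tA+(1-t)B)\,dt\bigr)$ (all Hermitian, since positive linear maps and integration preserve self-adjointness). The key device is to diagonalize $M$ and to prove the scalar inequality $f(\mu)\le\langle Y e,e\rangle$ for each unit eigenvector $e$ of $M$ with eigenvalue $\mu$. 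The point is that on such an $e$ one has $\langle X e,e\rangle=\langle f(M)e,e\rangle=f(\mu)$, so the Jensen gap that would otherwise point in the wrong direction collapses to an equality precisely on the eigenvectors of $M$; this is what makes the two lemmas combine cleanly.

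To produce that scalar inequality I would fix a unit eigenvector $e$ of $M$ and set $a=\langle\Phi(A)e,e\rangle$ and $b=\langle\Phi(B)e,e\rangle$, so that $\mu=\tfrac{a+b}{2}$. For each fixed $t\in[0,1]$ the matrix $tA+(1-t)B$ again lies in $\mathcal{H}_n(J)$, since its numerical range is the $t$-convex combination of those of $A$ and $B$; hence Lemma \ref{lem1} applies and, using linearity of $\Phi$ and of the inner product on the left, gives $f(ta+(1-t)b)=f(\langle\Phi(tA+(1-t)B)e,e\rangle)\le\langle\Phi(f(tA+(1-t)B))e,e\rangle$. Integrating over $t\in[0,1]$ and pulling $\Phi$ and the inner product through the integral yields $\int_0^1 f(ta+(1-t)b)\,dt\le\langle Y e,e\rangle$. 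Finally the left inequality of the scalar Hermite--Hadamard estimate \eqref{m}, applied to $f$ at the points $a,b\in J$, bounds the left-hand side below by $f\bigl(\tfrac{a+b}{2}\bigr)=f(\mu)$, completing the scalar step.

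With $f(\mu_i)\le\langle Y e_i,e_i\rangle$ established for every eigenpair $(\mu_i,e_i)$ of $M$, I would select the index set $S$ of size $k$ for which the $f(\mu_i)$ are the $k$ largest among $f(\mu_1),\dots,f(\mu_m)$. Because $\{e_i\}$ is an orthonormal eigenbasis of $f(M)$ with eigenvalues $f(\mu_i)$, we have $\sum_{j=1}^{k}\lambda_j(X)=\sum_{i\in S}f(\mu_i)$. Combining the scalar step with Lemma \ref{lem2} then gives $\sum_{i\in S}f(\mu_i)\le\sum_{i\in S}\langle Y e_i,e_i\rangle\le\sum_{j=1}^{k}\lambda_j(Y)$, where the last inequality holds because $\{e_i\}_{i\in S}$ is one admissible orthonormal system in the maximization of Lemma \ref{lem2}. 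Letting $k$ range over $1,\dots,m$ delivers the asserted weak majorization.

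The main obstacle will be the bookkeeping that guarantees every matrix to which Lemma \ref{lem1} and \eqref{m} are applied has spectrum in $J$, which must be handled separately in the two cases. In case (i) the unitality of $\Phi$ propagates the bounds $\alpha I_n\le A\le\beta I_n$ to $\alpha I_m\le\Phi(A)\le\beta I_m$, so $\Phi(A),\Phi(B)$ and hence $M$ lie in $\mathcal{H}_m(J)$ and $a,b,\mu\in J$. In case (ii) one instead uses $0\in J$ together with $0<\Phi(I_n)\le I_m$: the sign of the endpoints of $J$ combined with the sign reversal upon multiplying by $\Phi(I_n)-I_m$ still confines the spectrum of $\Phi(A)$ to $J$, while the hypotheses $\|e\|=1\le1$ and $f(0)\le0$ are exactly what Lemma \ref{lem1}(ii) requires. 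Apart from this verification, the only conceptual subtlety is the one already noted, namely that the estimate must be anchored on eigenvectors of $M$, where Jensen's inequality degenerates to equality, rather than on arbitrary test vectors.
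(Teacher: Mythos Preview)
Your proposal is correct and follows essentially the same route as the paper: diagonalize $M=\tfrac{\Phi(A)+\Phi(B)}{2}$, use the scalar Hermite--Hadamard inequality on each eigenvector together with Lemma~\ref{lem1}, and then invoke Lemma~\ref{lem2} to pass to the top-$k$ partial sums. The only differences are expository --- you isolate the scalar estimate $f(\mu_i)\le\langle Y e_i,e_i\rangle$ before summing, and you add an explicit check that the relevant spectra remain in $J$ in each of the two cases, which the paper leaves implicit in its appeal to Lemma~\ref{lem1}.
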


\begin{proof}
Suppose that $\lambda _{1}, \cdots , \lambda _{m}$ are the
eigenvalues of $\frac{\Phi(A)+\Phi(B)}{2}$ with $u_{1}, \cdots
,u_{m}$ as an orthonormal system of corresponding eigenvectors
arranged such that $f(\lambda _{1})\geq f(\lambda _{2})\geq \cdots
\geq f(\lambda _{m})$. We have
\begin{align*}
\sum_{j=1}^k&\lambda_j\left(f\left(\frac{\Phi(A)+\Phi(B)}{2}\right)\right)\\
&= \sum_{j=1}^kf\left(\left\langle \frac{\Phi(A)+\Phi(B)}{2}u_j,u_j\right\rangle\right)\qquad\qquad\qquad({\rm by~our~assumption~on~} u_j)\\
&\leq \sum_{j=1}^k\int_0^1f\left(\left\langle t\Phi(A)+(1-t)\Phi(B)u_j,u_j\right\rangle\right)\,dt\\
&\qquad\qquad\qquad\qquad\qquad\quad({\rm by~the~classical~Hermite--Hadamard~inequality~})\\
&= \sum_{j=1}^k\int_0^1f\left(\left\langle \Phi(tA+(1-t)B)u_j,u_j\right\rangle\right)\,dt\qquad\quad\quad\quad({\rm by~the~linearity~of~} \Phi)\\
&\leq \sum_{j=1}^k\int_0^1\left\langle \Phi\big(f(tA+(1-t)B)\big)u_j,u_j\right\rangle \,dt\quad\qquad\qquad\qquad({\rm by~Lemma~} \ref{lem1})\\
&= \sum_{j=1}^k \left\langle \int_0^1\Phi\big(f(tA+(1-t)B)\big)\,dt\,u_j,u_j\right\rangle\\
&\qquad\qquad\qquad\quad\qquad({\rm by~the~linearity~and~continuity~of~the~inner~product})\\
&\leq \sum_{j=1}^k \lambda_j\left( \int_0^1\Phi\big(f(tA+(1-t)B)\big)\,dt\right)\quad\qquad\qquad\qquad\quad({\rm by~Lemma~} \ref{lem2})\\
&= \sum_{j=1}^k\lambda_j\left(\Phi\left(\int_0^1f(tA+(1-t)B)dt\right)\right)\\
&\qquad\qquad\qquad\qquad\qquad\qquad\qquad\quad\quad({\rm
by~the~linearity~and~continuity~of~} \Phi).
\end{align*}
\end{proof}

Using Theorem \ref{t1} with $\Phi(A)=A$ we obtain that
\begin{corollary}
If $A, B\in \mathcal{H}_{n}([\omega,\Omega])$ and $f$ is a convex
function on $[\omega,\Omega]$, then
\begin{eqnarray*}
\lambda\left(f\left(\frac{A+B}{2}\right)\right)\prec_w
\lambda\left(\int_0^1f(tA+(1-t)B)\,dt\right) \,.
\end{eqnarray*}
In particular,
\begin{eqnarray*}
{\it Tr}\left(f\left(\frac{A+B}{2}\right)\right)\leq {\it
Tr}\left(\int_0^1f(tA+(1-t)B)\,dt\right)\,.
\end{eqnarray*}
\end{corollary}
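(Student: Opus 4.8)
The plan is to read off both assertions as specializations of Theorem~\ref{t1}, so essentially no new argument is required. First I would apply the theorem with $\Phi=\mathrm{id}$, the identity map on $\mathcal{M}_n$. This map is linear, positive (it sends each positive matrix to itself), and unital because $\mathrm{id}(I_n)=I_n$; hence hypothesis (i) of Theorem~\ref{t1} holds automatically, and I need impose no sign condition on $f(0)$ nor assume $0\in[\omega,\Omega]$. Taking $J=[\omega,\Omega]$ and substituting $\Phi(A)=A$, $\Phi(B)=B$ into the conclusion, every occurrence of $\Phi$ disappears and the weak majorization collapses to
\[
\lambda\!\left(f\!\left(\tfrac{A+B}{2}\right)\right)\prec_w\lambda\!\left(\int_0^1 f(tA+(1-t)B)\,dt\right),
\]
which is exactly the first claim.

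For the trace inequality I would use the $k=n$ instance of this weak majorization. Writing $X=f\!\left(\frac{A+B}{2}\right)$ and $Y=\int_0^1 f(tA+(1-t)B)\,dt$, both are Hermitian matrices in $\mathcal{M}_n$: $X$ is obtained from a Hermitian matrix through the continuous functional calculus, while $Y$ is an integral (a limit of Riemann sums) of Hermitian matrices and is therefore Hermitian as well. For a Hermitian matrix the trace coincides with the sum of all its eigenvalues, so setting $k=n$ in $\lambda(X)\prec_w\lambda(Y)$ yields
\[
\mathrm{Tr}(X)=\sum_{j=1}^n\lambda_j(X)\le\sum_{j=1}^n\lambda_j(Y)=\mathrm{Tr}(Y),
\]
which is the displayed trace estimate.

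The argument is almost purely formal, so I do not expect a genuine obstacle; the only points that really need checking are that the identity map meets hypothesis (i) of Theorem~\ref{t1} (a triviality) and that the two matrices in the statement are honestly Hermitian, so that ``sum of eigenvalues equals trace'' is legitimate. If one prefers to phrase the right-hand side scalar-wise, one may additionally record the identity $\mathrm{Tr}(Y)=\int_0^1\mathrm{Tr}\,f(tA+(1-t)B)\,dt$, which follows from the linearity and continuity of the trace, though it is not strictly needed for the proof.
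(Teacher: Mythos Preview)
Your proposal is correct and matches the paper's approach exactly: the paper derives the corollary in one line by applying Theorem~\ref{t1} with $\Phi(A)=A$, and you do the same, merely spelling out why the identity map satisfies hypothesis~(i) and why the $k=n$ case of weak majorization yields the trace inequality.
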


The fact that the function $f(t)=t^r$ is convex for $r>1$ yields
that
\begin{corollary}
Let $r>1$, $A, B\in \mathcal{H}_{n}([\omega,\Omega])$ and $\Phi:
\mathcal{M}_{n}\to \mathcal{M}_{m}$ be a positive linear map such
that either (i) it is unital or (ii) $0\in J, f(0)\leq 0$ and
$\Phi(I_n)\leq I_m$. Then
\begin{eqnarray*}
\left\vert \left\vert
\left\vert\left(\frac{\Phi(A)+\Phi(B)}{2}~\right)^r\right\vert
\right\vert \right\vert \leq \left\vert \left\vert
\left\vert\int_0^1\Phi((tA+(1-t)B)^r)\,dt~\right\vert \right\vert
\right\vert\,.
\end{eqnarray*}
\end{corollary}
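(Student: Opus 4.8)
The plan is to read this inequality off directly from Theorem~\ref{t1} by specializing the convex function and then translating weak majorization into a norm inequality via the Ky Fan dominance theorem. First I would apply Theorem~\ref{t1} to the function $f(t)=t^{r}$, which is convex on $[\omega,\Omega]$ whenever $r>1$ (here one tacitly takes $\omega\geq 0$, so that $t\mapsto t^{r}$ is well defined and $f(0)=0$, which in particular makes hypothesis (ii) consistent since $f(0)\leq 0$ holds automatically). As the hypotheses of the corollary are exactly those of Theorem~\ref{t1} for this choice of $f$, the theorem yields
\begin{equation*}
\lambda\left(\left(\frac{\Phi(A)+\Phi(B)}{2}\right)^{r}\right)\prec_w \lambda\left(\Phi\left(\int_0^1(tA+(1-t)B)^{r}\,dt\right)\right)\,.
\end{equation*}

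Next I would use the linearity and continuity of $\Phi$ to pull it under the integral sign, so that the right-hand side above becomes $\lambda\left(\int_0^1\Phi((tA+(1-t)B)^{r})\,dt\right)$; this puts the majorization into precisely the shape appearing in the statement. It then remains to convert the weak majorization into an inequality for every unitarily invariant norm, which is exactly what the Ky Fan dominance theorem provides.

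The one point that requires care is that the Ky Fan dominance theorem, as recorded in Section~2, is phrased in terms of the eigenvalue order $\lambda(\cdot)\prec_w\lambda(\cdot)$, and this phrasing is legitimate only when the eigenvalues coincide with the singular values, i.e. for positive semidefinite matrices. I would therefore verify positivity of both sides: since $\Phi$ is positive and $A,B\geq 0$ (recall $\omega\geq 0$), the operator $\frac{\Phi(A)+\Phi(B)}{2}$ is positive, hence so is its $r$-th power; likewise $(tA+(1-t)B)^{r}\geq 0$ for each $t\in[0,1]$, which forces the integral and its image under $\Phi$ to be positive as well. With both matrices positive semidefinite, the stated Ky Fan dominance theorem applies to the displayed majorization and delivers the asserted norm inequality. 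I expect this positivity bookkeeping to be the only genuine (and rather minor) obstacle; everything else is a direct substitution into Theorem~\ref{t1}.
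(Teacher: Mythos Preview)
Your proposal is correct and follows essentially the same route as the paper, which derives the corollary in one line by specializing Theorem~\ref{t1} to the convex function $f(t)=t^{r}$ and then passing to unitarily invariant norms via Ky Fan dominance. Your additional bookkeeping about positivity (so that eigenvalues coincide with singular values and the Ky Fan dominance theorem, as stated in Section~2, is applicable) is a point the paper leaves implicit, but it is exactly the right observation and does not change the approach.
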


Now we get some operator versions of the second inequality of the
Hermite--Hadamard inequality \label{hh} in two fashions. The first
version is for monotone convex functions and the second version,
which is weaker than the first one, is just for convex functions. To
present the first version we would extend the following interesting
result of Bourin to the positive linear maps.

\begin{lemma}
\cite[Theorem 2.2]{BOU}\label{l3} Let $A_1, \cdots, A_k \in
\mathcal{H}_{n}([\omega,\Omega])$ and $f$ be an increasing convex
function defined on $[\omega,\Omega]$ containing the spectra of
$A_i,\,i=1, \cdots, k$. If $Z_1, \cdots, Z_k$ are matrices with
$\sum_{i=1}^kZ_i^*Z_i=I_n$, then there is a unitary matrix $U$ such
that $f\left(\sum_{i=1}^kZ_i^*A_iZ_i\right)\leq
U\left(\sum_{i=1}^kZ_i^*f(A_i)Z_i\right)U^*$.
\end{lemma}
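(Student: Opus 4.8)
The plan is to recognize the two sums as compressions of a single block operator and then reduce the desired operator inequality to a family of scalar eigenvalue inequalities. First I would form the column isometry $V=\begin{pmatrix} Z_1 \\ \vdots \\ Z_k\end{pmatrix}\colon \mathbb{C}^n \to (\mathbb{C}^n)^k$ together with the block-diagonal Hermitian matrix $\mathbf{A}=A_1\oplus\cdots\oplus A_k$, whose spectrum lies in $[\omega,\Omega]$. The hypothesis $\sum_i Z_i^*Z_i=I_n$ says exactly that $V^*V=I_n$, i.e.\ $V$ is an isometry, and a direct block computation gives $\sum_i Z_i^*A_iZ_i=V^*\mathbf{A}V$; since $f(\mathbf{A})=f(A_1)\oplus\cdots\oplus f(A_k)$, it likewise gives $\sum_i Z_i^*f(A_i)Z_i=V^*f(\mathbf{A})V$. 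Thus the statement becomes: there is a unitary $U$ with $f(V^*\mathbf{A}V)\le U\big(V^*f(\mathbf{A})V\big)U^*$.

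By the eigenvalue characterization recalled in the preliminaries ($A\le U^*BU$ for some unitary $U$ if and only if $\lambda(A)\le\lambda(B)$), it suffices to prove the scalar inequalities $\lambda_j\big(f(V^*\mathbf{A}V)\big)\le\lambda_j\big(V^*f(\mathbf{A})V\big)$ for every $1\le j\le n$. Writing $X=V^*\mathbf{A}V$ with eigenvalues $\mu_1\ge\cdots\ge\mu_n$ and orthonormal eigenvectors $w_1,\ldots,w_n$, the monotonicity of $f$ gives $\lambda_j(f(X))=f(\mu_j)$, so the target reduces to $f(\mu_j)\le\lambda_j\big(V^*f(\mathbf{A})V\big)$.

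For the lower bound I would apply the Courant--Fischer min-max principle with the specific test subspace $M=\operatorname{span}\{w_1,\ldots,w_j\}$, obtaining $\lambda_j\big(V^*f(\mathbf{A})V\big)\ge\min_{w\in M,\,\|w\|=1}\langle f(\mathbf{A})Vw,Vw\rangle$. For a unit vector $w\in M$ the vector $Vw$ is again a unit vector, so Lemma \ref{lem1} (Jensen's inequality, with $\Phi$ the identity map and unit vector $Vw$) yields $\langle f(\mathbf{A})Vw,Vw\rangle\ge f(\langle \mathbf{A}Vw,Vw\rangle)=f(\langle Xw,w\rangle)$. Finally, every unit vector $w\in M$ satisfies $\langle Xw,w\rangle\ge\mu_j$, and since $f$ is increasing this forces $f(\langle Xw,w\rangle)\ge f(\mu_j)$; chaining the three estimates closes the argument.

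The step to get right, and the one where both hypotheses on $f$ are genuinely used, is the interplay of convexity and monotonicity in the last paragraph: convexity licenses the Jensen step through Lemma \ref{lem1}, while the increasing hypothesis is needed both to identify $\lambda_j(f(X))$ with $f(\mu_j)$ and to convert the eigenvalue bound $\langle Xw,w\rangle\ge\mu_j$ into the functional bound $f(\langle Xw,w\rangle)\ge f(\mu_j)$. Neither reduction survives when $f$ is merely convex, which is exactly why the convex-only version stated afterward is necessarily weaker.
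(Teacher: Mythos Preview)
The paper does not give its own proof of this lemma: it is quoted as \cite[Theorem~2.2]{BOU} and then used as a black box (and mildly extended to the sub-unital case) inside the proof of Theorem~\ref{t2}. So there is no in-paper argument to compare against.

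That said, your proof is correct and pleasantly self-contained. The block-isometry reduction $\sum_i Z_i^*A_iZ_i=V^*\mathbf{A}V$ with $V^*V=I_n$ is exactly the right packaging; the reformulation via the equivalence $\lambda(A)\le\lambda(B)\Leftrightarrow A\le U^*BU$ for some unitary $U$ is precisely the tool the paper singles out in the preliminaries; and the Courant--Fischer step with the eigenspace $M=\operatorname{span}\{w_1,\dots,w_j\}$, followed by the scalar Jensen inequality of Lemma~\ref{lem1} applied to the unit vector $Vw$, closes the argument cleanly. Your diagnosis of where each hypothesis enters is also accurate: convexity is consumed by the Jensen step, while monotonicity is used twice---once to identify $\lambda_j(f(X))$ with $f(\mu_j)$, and once to turn $\langle Xw,w\rangle\ge\mu_j$ into $f(\langle Xw,w\rangle)\ge f(\mu_j)$.
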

\begin{theorem}\label{t2}
Let $A_1, \cdots, A_k \in \mathcal{H}_{n}([\omega,\Omega])$ and $f$
be an increasing convex function defined on $[\omega,\Omega]$
containing the spectra of $A_i,\,i=1, \cdots, k$. If $\Phi_1,
\cdots, \Phi_k: \mathcal{M}_{n} \to \mathcal{M}_{m}$ are positive
linear maps such either (i)  $\sum_{i=1}^k\Phi_i(I_n)= I_m$ or (ii)
$0 \in J, f(0) \leq 0$ and $\sum_{i=1}^k\Phi_i(I_n)\leq I_m$, then
there is a unitary matrix $U$ such that
$f\left(\sum_{i=1}^k\Phi_i(A_i)\right)\leq
U\sum_{i=1}^k\Phi_i(f(A_i))U^*$.
\end{theorem}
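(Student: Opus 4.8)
The plan is to derive the theorem from Bourin's Lemma~\ref{l3} by expressing $\sum_{i=1}^k\Phi_i(A_i)$ as a congruence sum $\sum Z^*\hat{A}Z$ in which each $\hat{A}$ is a \emph{scalar} matrix, so that the lemma can be invoked in dimension $m$. First I would take the spectral decompositions $A_i=\sum_l\mu_{il}P_{il}$, with eigenvalues $\mu_{il}\in[\omega,\Omega]$ and orthogonal projections $P_{il}$ satisfying $\sum_lP_{il}=I_n$. Writing $Q_{il}:=\Phi_i(P_{il})$, positivity of $\Phi_i$ forces $Q_{il}\geq0$, and linearity gives both $\sum_i\Phi_i(A_i)=\sum_{i,l}\mu_{il}Q_{il}$ and, since $f(A_i)=\sum_lf(\mu_{il})P_{il}$, also $\sum_i\Phi_i(f(A_i))=\sum_{i,l}f(\mu_{il})Q_{il}$.

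The key observation is that, on setting $Z_{il}:=Q_{il}^{1/2}$ and $\hat{A}_{il}:=\mu_{il}I_m\in\mathcal{H}_m([\omega,\Omega])$, we have $Z_{il}^*\hat{A}_{il}Z_{il}=\mu_{il}Q_{il}$ and $Z_{il}^*f(\hat{A}_{il})Z_{il}=f(\mu_{il})Q_{il}$ (because $f(\hat{A}_{il})=f(\mu_{il})I_m$), whereas $\sum_{i,l}Z_{il}^*Z_{il}=\sum_i\Phi_i(I_n)$. In case (i) this last quantity equals $I_m$, so Lemma~\ref{l3}, applied to the families $\{\hat{A}_{il}\}$ and $\{Z_{il}\}$, produces a unitary $U$ with $f\big(\sum_i\Phi_i(A_i)\big)=f\big(\sum_{i,l}Z_{il}^*\hat{A}_{il}Z_{il}\big)\leq U\big(\sum_{i,l}Z_{il}^*f(\hat{A}_{il})Z_{il}\big)U^*=U\big(\sum_i\Phi_i(f(A_i))\big)U^*$, which is exactly the desired inequality.

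In case (ii), where only $\sum_i\Phi_i(I_n)\leq I_m$, I would restore the normalization $\sum Z^*Z=I_m$ by adjoining a single pair anchored at the origin: put $Z_0:=\big(I_m-\sum_i\Phi_i(I_n)\big)^{1/2}$ and $\hat{A}_0:=0$. Then $\hat{A}_0$ contributes nothing on the left, while on the right $Z_0^*f(\hat{A}_0)Z_0=f(0)\big(I_m-\sum_i\Phi_i(I_n)\big)\leq0$ by the hypotheses $0\in J$ and $f(0)\leq0$; applying Lemma~\ref{l3} and then discarding this nonpositive term (after the unitary congruence, which preserves the order $\leq$) yields the conclusion. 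The whole argument is thus a matter of the right bookkeeping, and I expect the only delicate point to be case (ii): the slack operator $Z_0$ must be anchored precisely at a point where $f\leq0$, so that the surplus term can legitimately be dropped in the correct direction of the order $\leq$.
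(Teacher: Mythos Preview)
Your proof is correct and follows essentially the same route as the paper: spectral decomposition of the $A_i$, square roots $\Phi_i(P_{il})^{1/2}$ as the congruence factors, application of Bourin's Lemma~\ref{l3} in dimension $m$, and in case~(ii) adjunction of a slack term $Z_0=(I_m-\sum_i\Phi_i(I_n))^{1/2}$ anchored at $0$ so that the extra contribution $f(0)Z_0^2\leq 0$ can be discarded. The only difference is organizational: the paper first isolates the single-map inequality $f(\Psi(A))\leq U\Psi(f(A))U^*$ (your argument with $k=1$) and then reduces the multi-map statement to it via the block-diagonal map $\Psi(\mathrm{diag}(A_1,\dots,A_k))=\sum_i\Phi_i(A_i)$, whereas you treat all $\Phi_i$ simultaneously in one application of Lemma~\ref{l3}; the underlying computation is the same.
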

\begin{proof}
First let us prove Lemma \ref{l3} whenever $0 \in J, f(0) \leq 0$ and $\sum_{i=1}^k\Phi_i(I_n)\leq I_m$:\\
Lemma \ref{l3} with $k=1$ and $0 \in J, f(0)\leq 0, Z^*Z \leq I_n$
instead of $Z^*Z=I_n$ is still true. In fact, due to $I_n-Z^*Z  \geq
0$, there is a  matrix $Y$ such that $Z^*Z+Y^*Y=I_n$. Using Lemma
\ref{l3}, we have
$$f(Z^*AZ)=f(Z^*AZ+Y^*0Y) \leq U(Z^*f(A)Z+Y^*f(0)Y)U^*\leq U^*Z^*f(A)ZU^*$$
for some unitary $U$. The general case now follows by considering
$Z$ to be the column vector $(Z_1, \cdots, Z_k)$ and $A$ to be the
diagonal matrix $A={\rm diag}(A_1, \cdots, A_k)$.

Second assume that $A$ is a Hermitian matrix and $\Psi:
\mathcal{M}_{n} \to \mathcal{M}_{m}$ is a positive linear map. Using
the spectral decomposition $A=\sum_j\lambda_jE_j$ of $A$, the fact
that
$\sum_j\sqrt{\Psi(E_j)}\sqrt{\Psi(E_j)}=\sum_j\Psi(E_j)=\Psi(I_n)$,
Lemma \ref{l3} and the paragraph above we have
\begin{align}\label{msm}
f(\Psi(A))&=f\left(\sum_j\lambda_j\Psi(E_j)\right)=f\left(\sum_j\sqrt{\Psi(E_j)}\lambda_j\sqrt{\Psi(E_j)}\right)\nonumber\\
&\leq U\left(\sum_j\sqrt{\Psi(E_j)}f(\lambda_j)\sqrt{\Psi(E_j)}\right)U^*=U\left(\sum_jf(\lambda_j)\Psi(E_j)\right)U^*\nonumber\\
&=U\Psi\left(\sum_jf(\lambda_j)E_j\right)U^*=U\Psi(f(A))U^*
\end{align}
for some unitary $U$.\\
Next assume that $A_1, \cdots, A_k \in
\mathcal{H}_{n}([\omega,\Omega])$. Set $$\Psi({\rm diag}(A_1,
\cdots, A_k))=\sum_{i=1}^k\Phi_i(A_i)\,.$$ Then $\Psi$ is clearly a
positive linear map. Hence there is a unitary $U$ such that
\begin{align*}
f\left(\sum_{i=1}^k\Phi_i(A_i)\right)&=f(\Psi({\rm diag}(A_1, \cdots, A_k)))\\
&\leq U\Psi(f({\rm diag}(A_1, \cdots, A_k)))U^*\quad\qquad\qquad\qquad\qquad\quad\,\,({\rm by~} \eqref{msm})\\
&= U\Psi({\rm diag}(f(A_1), \cdots, f(A_k)))U^*\quad({\rm by~the~functional~calculus})\\
&=U\sum_{i=1}^k\Phi_i(f(A_i))U^*\,.
\end{align*}
\end{proof}

We are in a situation to give a matrix version of the second
inequality of the Hermite--Hadamard inequality.

\begin{theorem} \label{t3} Let $A, B\in \mathcal{H}_{n}([\omega,\Omega])$, $f$ be an increasing convex function
on $[\omega,\Omega]$ and $\Phi: \mathcal{M}_n \to \mathcal{M}_m$ be
a positive linear map such that either (i) it is unital or (ii)
$0\in J, f(0)\leq 0$ and $\Phi(I_n)\leq I_m$. If there is a unitary
$U$ such that $f(t\Phi(A)+(1-t)\Phi(B))\leq
U\left[t\Phi(f(A))+(1-t)\Phi(f(B))\right]U^*$ for all $t \in [0,
1]$, then
\begin{align}\label{main1}
\lambda\left(\int_0^1f(\Phi(tA+(1-t)B))\right)\leq
\lambda\left(\frac{\Phi(f(A))+\Phi(f(B))}{2}\right)\,.
\end{align}
\end{theorem}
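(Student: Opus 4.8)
The plan is to use the built-in hypothesis directly: the existence of a \emph{single} unitary $U$ dominating $f(t\Phi(A)+(1-t)\Phi(B))$ for every $t$, and then to integrate this operator inequality over $[0,1]$.

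First I would note that conditions (i)/(ii) are present only to keep every expression meaningful. By the linearity of $\Phi$, $\Phi(tA+(1-t)B)=t\Phi(A)+(1-t)\Phi(B)$; under (i) the relation $\omega I_n\le A,B\le\Omega I_n$ passes through the unital positive map to give $\omega I_m\le t\Phi(A)+(1-t)\Phi(B)\le\Omega I_m$, while under (ii) the conditions $0\in J$ and $\Phi(I_n)\le I_m$ play the same stabilising role as in Lemma \ref{lem1} and Theorem \ref{t2}. In either case the spectrum of $t\Phi(A)+(1-t)\Phi(B)$ lies in $[\omega,\Omega]\subseteq J$, so $f$ may be applied to it.

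Next, again by linearity, $f(\Phi(tA+(1-t)B))=f(t\Phi(A)+(1-t)\Phi(B))$, so the hypothesis reads
\begin{equation*}
f(\Phi(tA+(1-t)B))\le U\bigl[t\Phi(f(A))+(1-t)\Phi(f(B))\bigr]U^*\qquad(t\in[0,1]).
\end{equation*}
The decisive structural point is that $U$ is independent of $t$. Since integration preserves the operator order (testing against an arbitrary vector $\xi$ reduces the claim to the scalar inequality $\int_0^1\langle P(t)\xi,\xi\rangle\,dt\le\int_0^1\langle Q(t)\xi,\xi\rangle\,dt$), I would integrate both sides over $[0,1]$ and pull the constant unitary outside the integral. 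Using $\int_0^1 t\,dt=\int_0^1(1-t)\,dt=\tfrac12$, the right-hand side collapses, yielding
\begin{equation*}
\int_0^1 f(\Phi(tA+(1-t)B))\,dt\le U\left[\frac{\Phi(f(A))+\Phi(f(B))}{2}\right]U^*.
\end{equation*}

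Finally I would invoke the criterion recorded in the preliminaries: for Hermitian $X,Y$ one has $\lambda(X)\le\lambda(Y)$ if and only if $X\le W^*YW$ for some unitary $W$. Taking $W=U^*$ turns the last display into exactly $X\le W^*YW$ with $X=\int_0^1 f(\Phi(tA+(1-t)B))\,dt$ and $Y=\frac{\Phi(f(A))+\Phi(f(B))}{2}$, and this gives \eqref{main1}. I do not anticipate a real obstacle: the theorem front-loads the hard content into its hypothesis (one unitary working uniformly in $t$), so the remaining work is just the order-preserving integration together with the standard dictionary between $X\le W^*YW$ and $\lambda(X)\le\lambda(Y)$. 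The single point demanding care is precisely the $t$-independence of $U$, without which it could not be extracted from the integral.
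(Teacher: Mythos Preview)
Your proposal is correct and follows essentially the same route as the paper: rewrite $f(\Phi(tA+(1-t)B))=f(t\Phi(A)+(1-t)\Phi(B))$, integrate the assumed inequality with the $t$-independent unitary pulled outside, collapse the right-hand side to $U\bigl[\tfrac{\Phi(f(A))+\Phi(f(B))}{2}\bigr]U^*$, and read off the eigenvalue inequality. If anything, you are more explicit than the paper about why the last step yields $\lambda(X)\le\lambda(Y)$ and about the role of conditions (i)/(ii).
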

\begin{proof}
By the assumption,
\begin{eqnarray*}
f(t\Phi(A)+(1-t)\Phi(B))\leq
U\left[t\Phi(f(A))+(1-t)\Phi(f(B))\right]U^*
\end{eqnarray*}
for some unitary $U$ and all $t \in [0,1]$. Hence
\begin{eqnarray*}
\int_0^1f(\Phi(tA+(1-t)B))&=&\int_0^1f(t\Phi(A)+(1-t)\Phi(B))\,dt\\
&\leq &\int_0^1 U\left[t\Phi(f(A))+(1-t)\Phi(f(B))\right]U^*\,dt\\
&= & U\,\int_0^1t\Phi(f(A))+(1-t)\Phi(f(B))\,dt\,U^*\\
&= & U\left[\frac{\Phi(f(A))+\Phi(f(B))}{2}\right]U^*\,.
\end{eqnarray*}
Thus we get \eqref{main1}.
\end{proof}

Now we use the Mond--Pe\v{c}ari\'{c} method \cite{seo} to get the
second version of the second inequality of the Hermite--Hadamard
inequality \label{hh}.

\begin{theorem} \label{t4} Let $A, B\in \mathbb{B}(\mathscr{H})$ be self-adjoint operators with spectra in $[\omega,\Omega]$, $f: [\omega,\Omega] \to (0,\infty)$ be a convex function and $\Phi: \mathcal{M}_{n} \to \mathcal{M}_{m}$ be a positive linear map. Then
\begin{align}\label{main2}
\Phi&\left(\int_0^1f(tA+(1-t)B)\,dt\right) \nonumber\\
&\leq
\max\left\{\frac{\Omega-t}{\Omega-\omega}\cdot\frac{f(\omega)}{f(t)}+\frac{t-\omega}{\Omega-\omega}\cdot\frac{f(\Omega)}{f(t)}:
t \in[\omega,\Omega]\right\}\frac{f(\Phi(A))+f(\Phi(B))}{2}\,.
\end{align}
\end{theorem}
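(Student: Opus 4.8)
The plan is to run the Mond--Pe\v{c}ari\'c method, whose core idea is to sandwich the convex function $f$ between two affine functions that interact well with the positive linear map. Write $L(s)=\frac{\Omega-s}{\Omega-\omega}f(\omega)+\frac{s-\omega}{\Omega-\omega}f(\Omega)$ for the chord of $f$ over $[\omega,\Omega]$, and set
\[
\mu:=\max_{t\in[\omega,\Omega]}\frac{L(t)}{f(t)}
=\max_{t\in[\omega,\Omega]}\left\{\frac{\Omega-t}{\Omega-\omega}\cdot\frac{f(\omega)}{f(t)}+\frac{t-\omega}{\Omega-\omega}\cdot\frac{f(\Omega)}{f(t)}\right\},
\]
which is exactly the scalar constant appearing on the right-hand side of \eqref{main2}. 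Since $f$ is convex and positive, on $[\omega,\Omega]$ one has the scalar sandwich $f(s)\le L(s)\le\mu f(s)$: the left inequality is convexity and the right one is the definition of $\mu$ (in particular $\mu\ge 1$). The whole argument consists of pushing each of these two scalar inequalities through the functional calculus at the appropriate operator and gluing the results with the affineness of $L$.

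First I would use the left inequality. For each $t\in[0,1]$ the operator $C_t:=tA+(1-t)B$ is self-adjoint with spectrum in $[\omega,\Omega]$, so $f(s)\le L(s)$ on the spectrum yields, by the monotonicity of the functional calculus,
\[
f\big(tA+(1-t)B\big)\le L\big(tA+(1-t)B\big)=\alpha\,\big(tA+(1-t)B\big)+\beta I,
\]
where $\alpha=\frac{f(\Omega)-f(\omega)}{\Omega-\omega}$ and $\beta=\frac{\Omega f(\omega)-\omega f(\Omega)}{\Omega-\omega}$ are the slope and intercept of $L$. This is the step that replaces operator convexity (which $f$ need not possess): because $f$ is dominated by an \emph{affine} function, the awkward term $f(tA+(1-t)B)$ becomes linear in $A$ and $B$. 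Integrating over $t\in[0,1]$, using $\int_0^1\big(tA+(1-t)B\big)\,dt=\frac{A+B}{2}$, and applying the positive linear map $\Phi$ gives
\[
\Phi\left(\int_0^1 f\big(tA+(1-t)B\big)\,dt\right)\le \alpha\,\frac{\Phi(A)+\Phi(B)}{2}+\beta\,\Phi(I_n),
\]
and under the (standard Mond--Pe\v{c}ari\'c) assumption that $\Phi$ is unital the right-hand side equals $\frac{L(\Phi(A))+L(\Phi(B))}{2}$ by the affineness of $L$.

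Finally I would feed in the right inequality $L(s)\le\mu f(s)$. Applying the monotone functional calculus to $\Phi(A)$ and to $\Phi(B)$ turns this into $L(\Phi(A))\le\mu f(\Phi(A))$ and $L(\Phi(B))\le\mu f(\Phi(B))$, and averaging yields
\[
\frac{L(\Phi(A))+L(\Phi(B))}{2}\le \mu\,\frac{f(\Phi(A))+f(\Phi(B))}{2},
\]
which chained with the previous display is precisely \eqref{main2}.

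The step I expect to be the genuine obstacle is the normalization that makes the last paragraph legitimate. The functional calculus bound $L\le\mu f$ may be invoked at $\Phi(A)$ and $\Phi(B)$ only once their spectra are known to lie inside $[\omega,\Omega]$, and the identity $\beta\,\Phi(I_n)=\beta I$ used above requires $\Phi$ to be unital. Assuming $\Phi(I_n)=I_m$, positivity gives $\omega I_m\le\Phi(A)\le\Omega I_m$ and likewise for $B$, so both spectra sit in $[\omega,\Omega]$ and both hurdles disappear; without unitality one would instead have to track $\Phi(I_n)\le I_m$ together with the signs of $\alpha$ and $\beta$, which is the point at which the extra hypotheses ($0\in J$, $f(0)\le0$) of the neighbouring theorems would be brought to bear.
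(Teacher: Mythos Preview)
Your proof is correct and follows the same Mond--Pe\v{c}ari\'c sandwich $f\le L\le\mu f$ that the paper uses. The only difference is in the execution of the second half: the paper passes to scalars $\langle\,\cdot\,x,x\rangle$ with a unit vector $x$, bounds the ratio $\langle\Phi(\int f(C_t)\,dt)x,x\rangle/f\!\left(\left\langle\tfrac{\Phi(A)+\Phi(B)}{2}x,x\right\rangle\right)$ by $\mu$, and then needs two further steps---scalar convexity $f\!\left(\tfrac{a+b}{2}\right)\le\tfrac{f(a)+f(b)}{2}$ and the Jensen-type inequality $f(\langle Cx,x\rangle)\le\langle f(C)x,x\rangle$---to reach $\left\langle\tfrac{f(\Phi(A))+f(\Phi(B))}{2}x,x\right\rangle$. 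You instead stay at the operator level and apply the functional calculus to $L\le\mu f$ directly at $\Phi(A)$ and $\Phi(B)$, which collapses those two steps into one and is a little cleaner. Both routes rely on the same (unstated in the theorem but implicit in the paper's computation) unitality of $\Phi$ to ensure $\sigma(\Phi(A)),\sigma(\Phi(B))\subseteq[\omega,\Omega]$ and $\Phi(\Omega I_n)=\Omega I_m$; you flag this explicitly, which is appropriate.
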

\begin{proof}
Let $A, B$ be Hermitian operators with the spectra in
$[\omega,\Omega]$. It follows from the convexity of $f$ that
\begin{eqnarray*}
f(t)=f\left(\frac{\Omega-t}{\Omega-\omega}.\omega+\frac{t-\omega}{\Omega-\omega}.\Omega\right)\leq
\frac{\Omega-t}{\Omega-\omega}f(\omega)+\frac{t-\omega}{\Omega-\omega}f(\Omega)
\end{eqnarray*}
for all $t\in[\omega,\Omega]$. Applying the functional calculus we
obtain
$$
f(tA+(1-t)B) \leq
\frac{\Omega-tA+(1-t)B}{\Omega-\omega}f(\omega)+\frac{tA+(1-t)B-\omega}{\Omega-\omega}f(\Omega)\,.$$
So that
\begin{eqnarray*}\Phi\left(f(tA+(1-t)B)\right) &\leq& \frac{\Omega-t\Phi(A)+(1-t)\Phi(B)}{\Omega-\omega}f(\omega)\\
&&+\frac{t\Phi(A)+(1-t)\Phi(B)-\omega}{\Omega-\omega}f(\Omega)\,,
\end{eqnarray*}
whence for each unit vector $x\in \mathscr{H}$, we get
\begin{align*}
\langle \Phi\left(f(tA+(1-t)B)\right)x,x\rangle &\leq \frac{\Omega-\langle(t\Phi(A)+(1-t)\Phi(B))x,x\rangle}{\Omega-\omega}f(\omega)\\
&+\frac{\langle(t\Phi(A)+(1-t)\Phi(B))x,x\rangle-\omega}{\Omega-\omega}f(\Omega).
\end{align*}
So
\begin{align*}
\int_0^1\langle \Phi\left(f(tA+(1-t)B)\right)x,x\rangle \,dt &\leq  \frac{\Omega-\int_0^1\langle(t\Phi(A)+(1-t)\Phi(B))x,x\rangle \,dt}{\Omega-\omega}f(\omega)\\
&+\frac{\int_0^1\langle(t\Phi(A)+(1-t)\Phi(B))x,x\rangle
\,dt-\omega}{\Omega-\omega}f(\Omega).
\end{align*}
Hence
\begin{align*}
\left\langle \int_0^1\Phi\left(f(tA+(1-t)B)\right)\,dt\,x,x\right\rangle &\leq \frac{\Omega-\left\langle\frac{\Phi(A)+\Phi(B)}{2}x,x\right\rangle}{\Omega-\omega}f(\omega)\\
&+\frac{\left
\langle\frac{\Phi(A)+\Phi(B)}{2}x,x\right\rangle-\omega}{\Omega-\omega}f(\Omega).
\end{align*}
Therefore
\begin{align*}
\left\langle \Phi\left(\int_0^1f(tA+(1-t)B)\,dt\right)\,x,x\right\rangle &\leq  \frac{\Omega-\left\langle\frac{\Phi(A)+\Phi(B)}{2}x,x\right\rangle}{\Omega-\omega}f(\omega)\\
&+\frac{\left
\langle\frac{\Phi(A)+\Phi(B)}{2}x,x\right\rangle-\omega}{\Omega-\omega}f(\Omega).
\end{align*}
Hence
\begin{align*}
\frac{\langle \Phi\left(\int_0^1f(tA+(1-t)B)\,dt\right)\,x,x\rangle}{f\left(\left\langle\frac{\Phi(A)+\Phi(B)}{2}x,x\right\rangle\right)} &\leq \frac{1}{f\left(\left\langle\frac{\Phi(A)+\Phi(B)}{2}x,x\right\rangle\right)}\\
& \hspace{-1.1in}\times \left(\frac{\Omega-\left\langle\frac{\Phi(A)+\Phi(B)}{2}x,x\right\rangle}{\Omega-\omega}f(\omega)+\frac{\left \langle\frac{\Phi(A)+\Phi(B)}{2}x,x\right\rangle-\omega}{\Omega-\omega}f(\Omega)\right).\\
\end{align*}
Thus
\begin{align*}
\Big\langle \Phi\Big(\int_0^1f(tA+&(1-t)B)\,dt\Big)x,x\Big\rangle\\
&\leq \alpha f\left(\left\langle\frac{\Phi(A)+\Phi(B)}{2}x,x\right\rangle\right)\\
&\leq \alpha f\left(\frac{\langle\Phi(A)x,x\rangle+\langle\Phi(B)x,x\rangle}{2}\right)\\
&\leq \alpha\, \frac{f(\langle\Phi(A)x,x\rangle)+f(\langle\Phi(B)x,x\rangle)}{2}\qquad\quad({\rm by~the~convexity~of~} f)\\
&\leq \alpha\, \frac{\langle f(\Phi(A))x,x\rangle+\langle f(\Phi(B))x,x\rangle}{2}\quad({\rm by~} f(\langle Ax,x\rangle \leq \langle f(A)x,x\rangle)\\
&\leq \left\langle \alpha\, \frac{f(\Phi(A))+ f(\Phi(B))}{2}x,x\right\rangle\,,\\
\end{align*}
where
$\alpha=\max\left\{\frac{\Omega-t}{\Omega-\omega}\cdot\frac{f(\omega)}{f(t)}+\frac{t-\omega}{\Omega-\omega}\cdot\frac{f(\Omega)}{f(t)}:
t \in[\omega,\Omega]\right\}$. Hence inequality \eqref{main2} holds.
\end{proof}

It follows from the Ky Fan Dominance Theorem (see \cite{KY}),
Theorem \ref{t1} and Theorem \ref{t4} that
\begin{corollary}
Let $A, B\in \mathcal{H}_{n}([\omega,\Omega])$, $f$ be a convex
function on $[\omega,\Omega]$ and $\Phi$ be a positive linear map
from $\mathcal{M}_{n}$ to $\mathcal{M}_{m}$. If either (i) $\Phi$ is
unital or (ii) $0\in [\omega,\Omega], f(0)=0$ and $0<\Phi
(I_{n})\leq I_{m}$, then
\begin{align*}
&\left\vert \left\vert \left\vert~f\left(\frac{\Phi(A)+\Phi(B)}{2}~\right)\right\vert \right\vert \right\vert \\
&\quad\leq \left\vert \left\vert \left\vert\Phi~\left(\int_0^1f(tA+(1-t)B)\,dt\right)~\right\vert \right\vert \right\vert\\
&\quad\leq
\max\left\{\frac{\Omega-t}{\Omega-\omega}\cdot\frac{f(\omega)}{f(t)}+\frac{t-\omega}{\Omega-\omega}\cdot\frac{f(\Omega)}{f(t)}:
t \in[\omega,\Omega]\right\}\left\vert \left\vert
\left\vert~\frac{f(\Phi(A))+f(\Phi(B))}{2}\right\vert \right\vert
\right\vert\,.
\end{align*}
\end{corollary}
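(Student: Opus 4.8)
The plan is to read this corollary as the unitarily invariant norm version of the two operator inequalities already established, with the Ky Fan Dominance Theorem serving as the bridge between weak majorization (respectively the L\"owner order) and the family of all unitarily invariant norms. Since $f$ takes values in $(0,\infty)$, the three matrices $f\left(\frac{\Phi(A)+\Phi(B)}{2}\right)$, $\Phi\left(\int_0^1 f(tA+(1-t)B)\,dt\right)$ and $\frac{f(\Phi(A))+f(\Phi(B))}{2}$ are all positive; hence for each of them the eigenvalue vector $\lambda(\cdot)$ coincides with its vector of singular values, and the Ky Fan Dominance Theorem applies in the form quoted in the Preliminaries.

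For the first inequality I would invoke Theorem \ref{t1} directly: under hypothesis (i) or (ii) it gives the weak majorization
\begin{equation*}
\lambda\left(f\left(\frac{\Phi(A)+\Phi(B)}{2}\right)\right)\prec_w \lambda\left(\Phi\left(\int_0^1 f(tA+(1-t)B)\,dt\right)\right),
\end{equation*}
and by the Ky Fan Dominance Theorem this is equivalent, for every unitarily invariant norm, to
\begin{equation*}
\left\vert\left\vert\left\vert f\left(\frac{\Phi(A)+\Phi(B)}{2}\right)\right\vert\right\vert\right\vert \le \left\vert\left\vert\left\vert \Phi\left(\int_0^1 f(tA+(1-t)B)\,dt\right)\right\vert\right\vert\right\vert,
\end{equation*}
which is exactly the first displayed inequality of the statement.

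For the second inequality I would feed the same data into Theorem \ref{t4}, which yields the L\"owner-order estimate
\begin{equation*}
\Phi\left(\int_0^1 f(tA+(1-t)B)\,dt\right)\le \alpha\,\frac{f(\Phi(A))+f(\Phi(B))}{2},
\end{equation*}
where $\alpha=\max\left\{\frac{\Omega-t}{\Omega-\omega}\cdot\frac{f(\omega)}{f(t)}+\frac{t-\omega}{\Omega-\omega}\cdot\frac{f(\Omega)}{f(t)}: t\in[\omega,\Omega]\right\}$. Both sides are positive, so the order chain $X\le Y\Rightarrow\lambda(X)\le\lambda(Y)\Rightarrow\lambda(X)\prec_w\lambda(Y)$ recorded in the Preliminaries applies with $Y=\alpha\,\frac{f(\Phi(A))+f(\Phi(B))}{2}$. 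A second application of the Ky Fan Dominance Theorem, together with the positive homogeneity of any norm, then upgrades this to the second displayed inequality, and concatenating the two norm inequalities gives the assertion.

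The routine manipulations are trivial; the point that needs genuine care is the joint consistency of the hypotheses and the legitimacy of the quoted form of Ky Fan dominance. Specifically, one must check that the positivity $f>0$ (forced both by the use of Theorem \ref{t4} and by finiteness of $\alpha$) is compatible with the condition $f(0)=0$ of case (ii) when $0\in[\omega,\Omega]$; in that degenerate situation $\alpha=+\infty$ and only the first inequality carries content, so the substantive statement is really the unital case (i). I would therefore verify explicitly that all three operators are positive, so that $\lambda(\cdot)$ agrees with the singular-value vector and the Preliminaries' version of the Ky Fan Dominance Theorem is genuinely applicable; this is the one step I treat as the real, if mild, obstacle rather than as automatic.
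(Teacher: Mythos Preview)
Your proposal is correct and follows exactly the route the paper takes: the corollary is stated immediately after Theorem~\ref{t4} with the one-line justification that it ``follows from the Ky Fan Dominance Theorem, Theorem~\ref{t1} and Theorem~\ref{t4}.'' Your additional remarks on positivity (so that eigenvalues coincide with singular values) and on the tension between $f>0$ and $f(0)=0$ in case~(ii) are valid observations that the paper does not spell out.
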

The mapping $\Phi(A)=\sum_{i=1}^kX_i^*AX_i$ is a positive linear
map. So that we infer the following result from Theorem \ref{t1} and
Theorem \ref{t4}.
\begin{corollary}
Let $A, B\in \mathcal{H}_{n}([\omega,\Omega])$, $f$ be a convex
function on $[\omega,\Omega]$ and $X_1, \cdots, X_k\in
\mathcal{M}_{n}$ such that $\sum_{i=1}^kX_i^*X_i=I_n$. Then
\begin{eqnarray*}
&&\hspace{-1.2in}\left\vert \left\vert \left\vert~f\left(\frac{1}{2}\sum_{i=1}^kX_i^*(A+B)X_i~\right)\right\vert \right\vert \right\vert\\
&\leq& \left\vert \left\vert \left\vert\sum_{i=1}^kX_i^*\int_0^1f(tA+(1-t)B)\,dtX_i~\right\vert \right\vert \right\vert\\
&\leq& \max\left\{\frac{\Omega-t}{\Omega-\omega}\cdot\frac{f(\omega)}{f(t)}+\frac{t-\omega}{\Omega-\omega}\cdot\frac{f(\Omega)}{f(t)}: t \in[\omega,\Omega]\right\}\\
&& \times\left\vert \left\vert
\left\vert~\frac{f(\sum_{i=1}^kX_i^*AX_i)+f(\sum_{i=1}^kX_i^*BX_i)}{2}\right\vert
\right\vert \right\vert\,.
\end{eqnarray*}
\end{corollary}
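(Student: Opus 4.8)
The plan is to read off this double inequality as the specialization of the immediately preceding Corollary to the concrete map $\Phi(X)=\sum_{i=1}^k X_i^*XX_i$, so that the work reduces to checking the hypotheses and identifying the three operator arguments.

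First I would verify that $\Phi\colon \mathcal{M}_n\to\mathcal{M}_n$ is a unital positive linear map. Linearity is clear; positivity holds because $X\geq 0$ forces each summand $X_i^*XX_i\geq 0$, hence $\Phi(X)\geq 0$; and the assumption $\sum_{i=1}^k X_i^*X_i=I_n$ gives exactly $\Phi(I_n)=I_n$, so $\Phi$ is unital. Consequently hypothesis (i) of the preceding Corollary is satisfied with $m=n$, and no positivity or normalization condition on $f$ beyond that already implicit in the $\max$ term (inherited from Theorem \ref{t4}) is needed.

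Next I would match the three quantities appearing in the preceding Corollary with those in the statement. By linearity, $\tfrac{\Phi(A)+\Phi(B)}{2}=\tfrac12\sum_{i=1}^k X_i^*(A+B)X_i$. Using the linearity and continuity of $\Phi$ — the same passage used in the proof of Theorem \ref{t1} — one pulls $\Phi$ inside the integral to obtain $\Phi\!\left(\int_0^1 f(tA+(1-t)B)\,dt\right)=\sum_{i=1}^k X_i^*\!\left(\int_0^1 f(tA+(1-t)B)\,dt\right)\!X_i$. Finally $\tfrac{f(\Phi(A))+f(\Phi(B))}{2}$ is by definition $\tfrac12\bigl(f(\sum_i X_i^*AX_i)+f(\sum_i X_i^*BX_i)\bigr)$. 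Substituting these identifications into the two inequalities of the preceding Corollary reproduces the claimed chain verbatim.

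Since every step is a direct substitution, there is no genuine obstacle; the only items meriting a line of justification are the unitality $\Phi(I_n)=I_n$ (which selects hypothesis (i) rather than (ii)) and the interchange of $\Phi$ with the integral, both of which are immediate. If one prefers to bypass the preceding Corollary, the left inequality follows from the weak majorization of Theorem \ref{t1} together with the Ky Fan dominance theorem, while the right inequality follows from Theorem \ref{t4} combined with the same dominance theorem.
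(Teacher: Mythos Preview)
Your proposal is correct and matches the paper's own argument: the paper simply notes that $\Phi(A)=\sum_{i=1}^k X_i^*AX_i$ is a (unital) positive linear map and invokes Theorem~\ref{t1} and Theorem~\ref{t4}, which is exactly what you do via the preceding Corollary (and you even spell out the direct route through those two theorems plus Ky Fan dominance). Your additional verification of unitality and of the interchange of $\Phi$ with the integral only makes the argument more explicit.
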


\section{Operator Hermite--Hadamard type inequalities for operator convex functions}

In this section we generalize the main result of \cite{DRA1}.
\begin{theorem}
If $A, B$ are self-adjoint operators on a Hilbert space $H$ with
spectra in an interval $J$, $f$ is an operator convex function on
$J$ and $k, p$ are positive integers, then
\begin{align}\label{0}
&f\left(\frac{A+B}{2}\right) \leq \frac{1}{k^p}\sum_{i=0}^{k^p-1}f\left(\frac{2i+1}{2k^p} A + \left(1-\frac{2i+1}{2k^p}\right)B\right)\nonumber\\
&\leq \int_0^1f(tA+(1-t)B)\,dt\nonumber\\
&\leq \frac{1}{2k^p}\sum_{i=0}^{k^p-1}\left[f\left(\frac{i+1}{k^p} A + \left(1-\frac{i+1}{k^p}\right)B\right)+f\left(\frac{i}{k^p} A + \left(1-\frac{i}{k^p}\right)B\right)\right]\nonumber\\
&\leq \frac{f(A)+f(B)}{2}\,.
\end{align}
\end{theorem}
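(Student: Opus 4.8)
The plan is to reduce the whole five--term chain in \eqref{0} to two ingredients applied on a uniform subdivision of $[0,1]$: the base operator Hermite--Hadamard inequality of Dragomir \cite{DRA1}, namely $f\left(\frac{C+D}{2}\right) \leq \int_0^1 f(sC+(1-s)D)\,ds \leq \frac{f(C)+f(D)}{2}$ for operator convex $f$ and self-adjoint $C,D$ with spectra in $J$, and Jensen's inequality for operator convex functions. Write $N := k^p$ and $C_i := \frac{i}{N}A+\left(1-\frac{i}{N}\right)B$ for $i=0,1,\dots,N$. Since $J$ is an interval and each $C_i$ is a convex combination of $A$ and $B$, every $C_i$ is self-adjoint with spectrum in $J$, and so is every $sC_{i+1}+(1-s)C_i$; note also the endpoints $C_0=B$ and $C_N=A$.

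First I would record the reparametrization on each subinterval. For $t\in\left[\frac{i}{N},\frac{i+1}{N}\right]$ set $t=\frac{i+s}{N}$ with $s\in[0,1]$; a direct check gives $tA+(1-t)B=sC_{i+1}+(1-s)C_i$ and $dt=\frac{ds}{N}$, so that $\int_{i/N}^{(i+1)/N} f(tA+(1-t)B)\,dt=\frac{1}{N}\int_0^1 f(sC_{i+1}+(1-s)C_i)\,ds$. Applying the base inequality to the pair $C_i,C_{i+1}$ yields, for each $i$,
\[
\frac{1}{N}\,f\!\left(\frac{C_i+C_{i+1}}{2}\right)\leq \int_{i/N}^{(i+1)/N}\! f(tA+(1-t)B)\,dt \leq \frac{1}{2N}\big[f(C_i)+f(C_{i+1})\big].
\]
Since $\frac{C_i+C_{i+1}}{2}=\frac{2i+1}{2N}A+\left(1-\frac{2i+1}{2N}\right)B$, summing over $i=0,\dots,N-1$ and using $\sum_i\int_{i/N}^{(i+1)/N}=\int_0^1$ gives at once the second and third inequalities of \eqref{0}.

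It remains to close off the two ends, and both steps use only operator convexity. For the first inequality I would observe that $\frac{1}{N}\sum_{i=0}^{N-1}\frac{C_i+C_{i+1}}{2}=\frac{A+B}{2}$; indeed the $A$--coefficient is $\frac{1}{2N^2}\sum_{i=0}^{N-1}(2i+1)=\frac{1}{2}$ because $\sum_{i=0}^{N-1}(2i+1)=N^2$. Applying Jensen's inequality $f\left(\sum_j\lambda_j X_j\right)\leq\sum_j\lambda_j f(X_j)$ (the standard consequence of operator convexity, obtained by induction from the two--point definition) to this equal--weight convex combination of midpoints yields the first inequality. For the last inequality, operator convexity gives $f(C_i)\leq\frac{i}{N}f(A)+\left(1-\frac{i}{N}\right)f(B)$ for every $i$; substituting this termwise into $\frac{1}{2N}\sum_{i=0}^{N-1}[f(C_i)+f(C_{i+1})]$ and again invoking $\sum_{i=0}^{N-1}(2i+1)=N^2$ collapses the bound to exactly $\frac{f(A)+f(B)}{2}$.

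I expect no genuine obstacle beyond bookkeeping: the points that require care are the index arithmetic, specifically verifying that the sub--segment midpoints average to $\frac{A+B}{2}$ and that the termwise convexity estimates sum to the clean endpoint bound $\frac{f(A)+f(B)}{2}$ (here it is essential that $C_0=B$ and $C_N=A$). Everything rests on the base operator Hermite--Hadamard inequality together with Jensen's inequality, and no stronger form of operator convexity is needed.
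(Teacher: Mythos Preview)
Your proof is correct and follows the same subdivide-and-sum architecture as the paper. The one genuine difference is in how the two middle inequalities are obtained: you invoke Dragomir's operator Hermite--Hadamard inequality as a black box on each pair $(C_i,C_{i+1})$, whereas the paper fixes a unit vector $x$, forms the scalar convex function $\rho(t)=\langle f(tA+(1-t)B)x,x\rangle$, and applies the \emph{classical} Hermite--Hadamard inequality to $\rho$ on each subinterval $[i/N,(i+1)/N]$ before passing back to operators. Your route is cleaner and more modular; the paper's route is self-contained in that it does not presuppose the operator result it is generalizing. The outer two inequalities are handled identically in both proofs, via operator convexity and the arithmetic identity $\sum_{i=0}^{N-1}(2i+1)=N^2$.
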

\begin{proof}
Let $x\in H$ be a unit vector. It is easy to see that the function
$\rho(t)=\langle f(tA+(1-t)B)x,x\rangle$ is a real-valued convex
function on the interval $[0,1]$, see \cite[Theorem 2.1]{DRA1}.
Utilizing the classical Hermite--Hadamard inequality on the interval
$[\frac{i}{k^p}, \frac{i+1}{k^p}]$, we get that
\begin{eqnarray*}
\rho\left(\frac{2i+1}{2k^p}\right) \leq
k^p\int_\frac{i}{k^p}^{\frac{i+1}{k^p}}\rho(t)\,dt \leq
\frac{\rho\left(\frac{i}{k^p}\right)+\rho\left(\frac{i+1}{k^p}\right)}{2}\,.
\end{eqnarray*}
Summation of the above inequalities over $i=0, 1, \cdots, k^p-1$
yields
\begin{eqnarray*}
\sum_{i=0}^{k^p-1}\rho\left(\frac{2i+1}{2k^p}\right) \leq
k^p\int_0^1\rho(t)\,dt \leq
\sum_{i=0}^{k^p-1}\frac{\rho\left(\frac{i}{k^p}\right)+\rho\left(\frac{i+1}{k^p}\right)}{2}\,.
\end{eqnarray*}
Hence
\begin{align}\label{1}
&\frac{1}{k^p}\sum_{i=0}^{k^p-1} f\left(\frac{2i+1}{2k^p} A + \left(1-\frac{2i+1}{2k^p}\right)B\right)\nonumber\\
&\leq \int_0^1f(tA+(1-t)B)\,dt\nonumber\\
&\leq \frac{1}{2k^p}\sum_{i=0}^{k^p-1}\left[f\left(\frac{i+1}{k^p} A
+ \left(1-\frac{i+1}{k^p}\right)B\right)+ f\left(\frac{i}{k^p} A +
\left(1-\frac{i}{k^p}\right)B\right)\right]\,.
\end{align}
By the operator convexity of $f$ we have
\begin{align}\label{2}
\frac{1}{k^p}\sum_{i=0}^{k^p-1}& f\left(\frac{2i+1}{2k^p} A + \left(1-\frac{2i+1}{2k^p}\right)B\right)\nonumber\\
&\geq f\left[\frac{1}{k^p}\sum_{i=0}^{k^p-1}\left(\frac{2i+1}{2k^p} A + \left(1-\frac{2i+1}{2k^p}\right)B\right)\right]\nonumber\\
&= f\left[\frac{\sum_{i=0}^{k^p-1}(2i+1)}{2k^{2p}}A + \left(1-\frac{\sum_{i=0}^{k^p-1}(2i+1)}{2k^{2p}}B\right)\right]\nonumber\\
&=f\left(\frac{A+B}{2}\right)
\end{align}
and
\begin{align}\label{3}
&\frac{1}{2k^p}\sum_{i=0}^{k^p-1}\left[f\left(\frac{i+1}{k^p} A + \left(1-\frac{i+1}{k^p}\right)B\right)+ f\left(\frac{i}{k^p} A + \left(1-\frac{i}{k^p}\right)B\right)\right]\nonumber\\
&\leq \frac{1}{2k^p}\sum_{i=0}^{k^p-1}\left[\frac{i+1}{k^p}f(A) + \left(1-\frac{i+1}{k^p}\right)f(B) + \frac{i}{k^p}f(A) + \left(1-\frac{i}{k^p}\right)f(B)\right]\nonumber\\
&=\frac{f(A)+f(B)}{2}\,.
\end{align}
Now \eqref{1}, \eqref{2} and \eqref{3} yield the whole inequalities
\eqref{0} as desired.
\end{proof}

\bibliographystyle{amsplain}

\end{document}